\newcommand{\eps}{\varepsilon}
\newcommand{\pr}[1]{\left( #1\right)}
\newcommand{\pg}[1]{\left\{ #1\right\}}
\newcommand{\pmd}[1]{\left| #1\right|}
\let\originalleft\left
\let\originalright\right
\renewcommand{\left}{\mathopen{}\mathclose\bgroup\originalleft}
\renewcommand{\right}{\aftergroup\egroup\originalright}
\newcommand{\es}[1]{\begin{equation}\begin{split}#1\end{split}\end{equation}}
\newcommand{\est}[1]{\begin{equation*}\begin{split}#1\end{split}\end{equation*}}
\newcommand{\R}{\mathbb{R}}
\newcommand{\C}{\mathbb{C}}
\newcommand{\N}{\mathbb{N}}
\newcommand{\Z}{\mathbb{Z}}
\renewcommand{\mod}[1]{~\pr{\textnormal{mod}~#1}}
\newcommand{\e}[1]{\operatorname{e}\pr{ #1}}
\def\sumh{\operatornamewithlimits{\sum\nolimits^h}}
\renewcommand{\d}{\textnormal{d}}
\newtheorem{theorem}{Theorem}[section]
\newtheorem{corollary}{Corollary}[section]
\newtheorem{lemma}{Lemma}[section]
\newtheorem{remark}{Remark}[section]
\newtheorem*{remark*}{Remark}
\numberwithin{equation}{section}
\begin{document}

\title{The first moment of twisted Hecke $L$-functions with unbounded shifts}
\author[S. Bettin]{Sandro Bettin}
\address{Dipartimento di Matematica, Universit\`a di Genova\\
via Dodecaneso 35\\ 16146 Genova, Italy. 
}
\email{bettin@dima.unige.it}

\begin{abstract}
We compute the first moment of twisted Hecke $L$-functions of prime power level going to infinity, uniformly in the conductor of the twist and in the vertical shift.
\end{abstract}

\maketitle

\section{Introduction}

$L$-functions associated to modular forms have been studied extensively with applications in many directions of number theory. In this paper we focus on averages of Hecke $L$-functions twisted by a primitive Dirichlet character $\chi$ of conductor coprime with the level $N$. The (twisted) $L$-functions associated to primitive forms of a given weight form an orthogonal family in the sense of Katz and Sarnak~\cite{KS}. Thus, for a primitive Dirichlet character $\chi$ with conductor $q$ coprime with $N$, one expects that
\es{\label{mccuf} 
\sumh_{f\in H_k^*(N)} L\pr{1/2,f\otimes\chi}^r = P_{r,k,\chi}(\log N)+o_{q,k,r}(1),
}
as $N\rightarrow\infty$, where $P_{r,k,\chi}$ is a polynomial of degree $\frac{r(r-1)}2$. Here, $H_k^*(N)$ denotes the subset of $H_k(N)$ consisting of primitive forms, where $H_k(N)$ is the Hecke basis for  $S_k(N)$, with $S_k(N)$ being the space of primitive cusp forms of weight $k$ and relative to the subgroup $\Gamma_0(N)$. Also, the $L$-function $L\pr{s,f\otimes\chi}$ is normalized to have central point at $s=\frac12$, that is if $f(z)$ has Fourier expansion $\sum_{n\geq1}a_n(f)n^{(k-1)/2}$ then
\est{
L\pr{s,f\otimes\chi}:=\sum_{n\geq 1}a_n(f)\chi(n)n^{-s},\qquad \Re(s)>1.
}
Finally, $\sumh$ indicates the harmonic average, that is
\est{
\sumh_{f\in H_k(N)^*}\alpha_f=\sum_{f\in H_k(N)^*}\frac{\alpha_f}{2\pi (f,f)}
}
where $(f,g)$ is Petersson's inner product.
 
Duke~\cite{Duk} computed the asymptotics~\eqref{mccuf} in the case $r=1,2$, provided that $N$ is prime and $k=2$, with an error term of size $O_\eps\pr{N^{-1/2+\varepsilon}}$. For the first moment, Ellenberg~\cite{Ell} improved the bound for the error term to $O\pr{N^{-1+\varepsilon}}$. He needed this better estimate to tackle the problem of finding all primitive solutions to the generalized Fermat equation $a^2+b^2=c^p$.

In the pioneering work~\cite{IS}, Iwaniec and Sarnak studied the first and second moment (both in the level and the weight aspects) in the case of real characters. They showed that  for $r=1,2$ the asymptotics~\eqref{mccuf} holds for all even $k\geq2$ and they relaxed also the condition on the primality of $N$, replacing it by $\frac{\varphi(N)}{N}\rightarrow 1$ with $N$ square-free,  where $\varphi(n)$ is Euler's totient function. They studied this asymptotic in an attempt to show that there are no Siegel zeros, proving that the non-existence of such exceptional zeros would follow from the non-vanishing (with some additional lower bound) of strictly more than $\frac14$ of the central values of the Hecke $L$-functions (asymptotically, when either the level or the weight goes to infinity). 

The asymptotics for the (mollified) fourth moment was proved by Kowalski, Michel and VanderKam~\cite{KMV} for prime levels. From this result they also deduced the non-vanishing of a positive proportion of the central values of $L\pr{s,f}L\pr{s,f\otimes\chi}$ for any fixed characters $\chi$. (For other applications of results on moments of Hecke $L$-functions see, among others,~\cite{DFI},~\cite{KM} and~\cite{Van}.) Their work was later extended to prime powers by Balkanova~\cite{Bal1}.
Finally, the asymptotic for the third moment was proven by Rouymi~\cite{Rou} in the case where the level is a prime power.

Rather than computing moments at the central point, it is often useful to add shifts and consider
\est{
 \sumh_{f\in\mathcal H^*_k(N)} L\pr{1/2+\alpha_1,f\otimes\chi}\cdots L\pr{1/2+\alpha_m,f\otimes\chi},
}
as these reveal more clearly the combinatorics behind the main terms. Usually the shifts are taken to be fixed (or less than $q^\varepsilon$ for some small $\varepsilon>0$), however when studying the $n$-correlation of zeros one would like to apply conjectures on moments of ratios of shifted $L$-functions and integrate over the shifts. Thus, one needs to understand for what range of shifted parameters the asymptotics for the moments still hold. 

In this paper we shall consider the shifted first moment. Kamiya addressed this problem in~\cite{Kam}, showing that if $N$ is prime and $\Re(\alpha)=0$ then
\es{\label{asymcufs}
\sumh_{f\in\mathcal H_k^*(N)} L\pr{\frac12+\alpha,f\otimes\chi}\sim 1,
}
for $k\in\pg{2,4,6,8,10,14}$ and  $q T\ll N^{\frac12-\varepsilon}$, where $T:=1+|\Im(\alpha)|$. The following theorem extends the range of validity of~\eqref{asymcufs} to $qT \ll N^{2-\varepsilon}$ with $N$ a prime power, as well as allowing for a twist of the form $a_m(f)$ as needed for non-vanishing applications~\cite{Bal2}. We take $k=2$ for simplicity, however the result is easily generalizable to all $k$. 

\begin{theorem}\label{cuorecuf}
Let $N=p^\nu$ with $p$ prime and $\nu\geq1$ and let $\chi$ be a primitive character modulo $q$ with $(q,N)=1$. Let $|\Re(\alpha)|\ll\frac1{\log N}$ and write $T=1+|\Im(\alpha)|$. Then, if $\nu\geq 2$ and $p|m$ then $\mathcal M_m(\alpha,\chi;N)=0$. In all other cases for all $\eps>0$ we have
\est{
\mathcal M_m(\alpha,\chi;N)=\frac{\chi(m)}{m^{\frac12+\alpha}}(1-\delta_\nu(p))+O_\eps({(qTm)^{1/2}}N^{-1+\eps}),
}
as $N$ goes to infinity, where $\delta_\nu(p)=0$ if $\nu=1$, $\delta_\nu(p)=\frac1{p-p^{-1}}$ if $\nu=2$ and $\delta_{\nu}(p)=\frac1{p}$ otherwise.
\end{theorem}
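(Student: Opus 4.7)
The plan is to combine an approximate functional equation with the Petersson trace formula for primitive forms of prime-power level. Before starting the main computation, note that the vanishing case is immediate: when $N=p^\nu$ with $\nu\geq 2$ and $p\mid m$, Atkin--Lehner theory gives $a_p(f)=0$ for every $f\in H_k^*(p^\nu)$, so Hecke multiplicativity forces $a_m(f)=0$ whenever $p\mid m$, and hence $\mathcal M_m(\alpha,\chi;N)=0$ from the very definition of the harmonic average.

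In the remaining cases I would apply an approximate functional equation for $L(1/2+\alpha,f\otimes\chi)$. Since the analytic conductor is $\asymp q^2NT^2$, this expresses the $L$-value as two smoothly truncated Dirichlet series of effective length $\asymp qN^{1/2}T$, with the dual piece weighted by the $f$-dependent root number $\epsilon(f,\alpha,\chi)$ (which, after opening, is essentially $\tau(\chi)^2\chi(N)$ times a ratio of gamma factors). Multiplying by $a_m(f)$ and exchanging summations reduces the computation of $\mathcal M_m(\alpha,\chi;N)$ to estimating harmonic Petersson averages $\sumh_{f\in H_k^*(N)} a_m(f)a_n(f)$ uniformly for $n$ up to the cutoff.

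At prime-power level the primitive forms do not form a basis of $S_k(N)$, so following~\cite{Rou} and~\cite{Bal1} I would rewrite the newform harmonic sum as a Möbius-type combination of unrestricted harmonic sums $\sumh_{f\in H_k(p^j)}$ for $0\leq j\leq\nu$, each of which is evaluated by the classical Petersson trace formula: a diagonal $\delta_{m=n}$ plus an explicit sum of Kloosterman terms $S(m,n;cp^j)J_{k-1}(4\pi\sqrt{mn}/(cp^j))$. Collecting the diagonal pieces across the sub-levels and tracking the combinatorial weights of the Möbius inversion produces a scalar of the form $1-\delta_\nu(p)$; the precise values $\delta_1(p)=0$, $\delta_2(p)=1/(p-p^{-1})$, and $\delta_\nu(p)=1/p$ for $\nu\geq 3$ fall out of the geometric-series-in-$p^{-1}$ accounting for the mass of oldforms inside $H_k(p^\nu)$. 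Combining the main terms from both halves of the approximate functional equation then collapses the smooth cutoffs to $1$ at $n=m$ (for $m$ well below the cutoff), yielding the stated main term $\chi(m)m^{-1/2-\alpha}(1-\delta_\nu(p))$.

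The main obstacle will be controlling the off-diagonal Kloosterman contributions uniformly in $q$ and $T$. The straight Weil bound $|S(m,n;c)|\ll (m,n,c)^{1/2}c^{1/2+\eps}$ combined with the standard Bessel asymptotics for $J_{k-1}$ handles the ``first'' sum, while the $T$-aspect is absorbed by Stirling estimates for the gamma ratios implicit in the approximate functional equation weights. The delicate piece is the ``dual'' sum: the Gauss sum of size $q^{1/2}$ in the root number, together with the truncation length $qN^{1/2}T\cdot N^\eps$, is precisely what forces the factor $(qTm)^{1/2}$ in the error term. Making these estimates uniform as $qT$ approaches $N^2$ — so that the dual sum is much longer than $N$ and no trivial bound survives — is the technical heart of the proof, and requires exploiting the Weil bound sharply against the combined length of the $c$- and $n$-sums rather than replacing it by a Kuznetsov-type spectral identity (which would only move the difficulty).
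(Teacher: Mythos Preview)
Your handling of the vanishing case and of the passage from $H_k^*(p^\nu)$ to the full bases $H_k(p^j)$ via Rouymi's inversion is fine and matches the paper. The gap is in the off-diagonal estimate. If you apply Weil's bound $|S(m,n;c)|\ll c^{1/2+\eps}$ term by term and then sum trivially over $n$ up to the AFE cutoff $X\asymp qN^{1/2}T$, you obtain an error
\[
\sum_{n\leq X}n^{-1/2}\sum_{N\mid c}\frac{c^{1/2+\eps}}{c}\cdot\frac{\sqrt{mn}}{c}\ \ll\ \sqrt{m}\,X\,N^{-3/2+\eps}\ =\ \frac{\sqrt{m}\,qT}{N^{1-\eps}},
\]
which is weaker than the claimed $(qTm)^{1/2}N^{-1+\eps}$ by a factor $(qT)^{1/2}$; for $qT$ near $N^{2}$ it is worse than trivial. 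No refinement of Weil recovers this, because the missing $(qT)^{1/2}$ has to come from cancellation in the $n$-sum, not from the individual Kloosterman sums. Your last paragraph asserts that ``exploiting the Weil bound sharply against the combined length of the $c$- and $n$-sums'' suffices, but it does not.

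The paper proceeds differently in two respects. First, it uses a \emph{one-sided} smoothed sum of length $Y=(mNqT)^2$: since $Y\gg q^2T^2N^{1+\eps}$, the piece coming from the functional equation of $L(s,f\otimes\chi)$ is $O(N^{-A})$, so the $f$-dependent root number never enters and there is no dual sum to treat. Second --- and this is the idea your plan is missing --- after Petersson the paper \emph{opens} the Kloosterman sum and executes the $n$-sum first. Via Mellin inversion on $J_1$ and on the cutoff $V$, the sum $\sum_n \chi(n)e(na/c)n^{-s}$ is recognised as the twisted periodic zeta function $F^*(s,\chi,a/c)$. The paper establishes its functional equation (relating it to a twisted Hurwitz zeta function) and deduces a convexity-type bound which, after averaging over residues $a\bmod c$, gives $\ll c^{1+\eps}(q+q|s|)^{(1-\Re s)/2+\eps}$; the square root in $q|s|$ here is exactly the missing $(qT)^{1/2}$. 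In effect the paper applies Poisson/Voronoi summation to the $n$-sum rather than Weil to the Kloosterman sum, and without that step the range $qT\ll N^{2-\eps}$ is out of reach.
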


The proof is rather simple and is based on Petersson's formula and on the functional equation for the ``twisted periodic zeta-function'' which is the meromorphic continuation to $\C$ of
\es{\label{fus}
F^*\pr{s,\chi,\frac ac}:=\sum_{n\geq1}\frac{\chi(n) \e{\frac{na}c}}{n^s}\qquad \Re(s)>1
}
with $(a,c)=1$, $c>0$, and $\chi$ a primitive character modulo $q$. Analogously to what happens in the case where $q=1$, the functional equation relates $F^*\pr{s,\chi,\frac ac}$ with $F_*\pr{1-s,\overline \chi,-aq/c}$ where $F_*\pr{s,\chi,x}$ is the ``twisted Hurwitz zeta-function''
\es{\label{fus2}
F_*\pr{s,\chi,x}:=\sum_{n+x>0}\frac{\chi(n)}{(n+x)^s},\qquad \Re(s)>1,\ x\in\R.
}
\section*{Acknowledgments}
A weaker version of Theorem~\ref{cuorecuf} was proven in the author's PhD thesis. 

The author would like to thank Olga Balkanova~for useful comments.

\section{Preliminaries and the computation of the main term}\label{prel}
\begin{remark}
Throughout  the  paper,  we  use  the  common  convention  in  analytic  number theory  that $\eps$ denotes  an  arbitrarily  small  positive  quantity  that  may  vary  from  line  to line.
\end{remark}

We define $T:=|\Im(\alpha)|+1$ and assume that $T,m,q\ll N^{100}$ (otherwise the result is trivial) and $\Re(\alpha)\ll \frac1{\log N}$.

We shall show that 
\es{\label{rmr}
M_m(\alpha,\chi; N):=\sumh_{f\in H_k(N)}{a_f(m)} L\pr{\tfrac{1}{2}+\alpha,f\otimes\chi}=\frac{\chi(m)}{m^{\frac12+\alpha}}+O_\eps\Big(\frac{{(qTm)^{1/2}}}{N^{1-\eps}}\Big),
}
where $N$ is any integer. If $N$ is prime and $k=2$, then $H_2(N)=H_2^*(N)$ and so we obtain Theorem~\ref{cuorecuf} in the case $\nu=1$.

Next, we express $L\pr{\frac{1}{2}+\alpha,f\otimes\chi}$ as a sum of length $Y\gg q^2T^2N^{1+\eps}$.

\begin{lemma}\label{fadr}
Let $f\in H_k\pr{N}$ and let $\chi$ be a primitive Dirichlet character modulo $q$ with $(q,N)=1$. Let $\eps>0$ and let $Y\gg q^2T^2N^{1+\eps}$. Then
\es{\label{apfeqcuf}
L\pr{\tfrac12+\alpha,f}&=\sum_{n\geq1}\frac{\chi(n)a_f(n)}{n^{\frac12+\alpha}}V\pr{\frac{n}{Y}}+O_{\eps,A}(N^{-A})\\
}
for any $A>0$, where
\es{\label{defvacuf}
V(x)&:=\frac1{2\pi i}\int_{(2)}e^{s^2}x^{-s}\,\frac{\d s}s.
}
\end{lemma}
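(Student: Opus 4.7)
The plan is to recognise the sum on the right-hand side of~\eqref{apfeqcuf} as a contour integral via the Mellin--Barnes representation~\eqref{defvacuf} of $V$, shift the contour past $s=0$ to pick up $L(\tfrac12+\alpha,f\otimes\chi)$ as a residue, and then show that the remaining integral is negligible thanks to the length hypothesis on $Y$. To begin, I would substitute~\eqref{defvacuf} into the sum and interchange summation and integration. On the line $\Re(s)=2$ the Dirichlet series $\sum_n\chi(n)a_f(n)n^{-1/2-\alpha-s}$ converges absolutely, uniformly in $s$, while the factor $e^{s^2}$ provides rapid decay in $|\Im s|$; this legitimates the swap and yields
\est{
\sum_{n\geq1}\frac{\chi(n)a_f(n)}{n^{1/2+\alpha}}V\!\left(\frac nY\right)=\frac{1}{2\pi i}\int_{(2)}e^{s^2}Y^{s}\,L\!\left(\tfrac12+\alpha+s,f\otimes\chi\right)\frac{\d s}{s}.
}

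Next I would shift the line of integration to $\Re(s)=-B$ for a large parameter $B$ to be chosen later. Since $f$ is a cusp form and $\chi$ is primitive with $(q,N)=1$, the twisted $L$-function $L(s,f\otimes\chi)$ is entire, so the only pole crossed is the simple pole of $1/s$ at $s=0$; its residue equals $L(\tfrac12+\alpha,f\otimes\chi)$, which is exactly the main term of~\eqref{apfeqcuf}. To estimate the shifted integral I would then invoke the functional equation of $L(s,f\otimes\chi)$ together with Stirling's formula and the trivial bound on the line of absolute convergence $\Re(s')=\tfrac12+B+O(1/\log N)$, so as to obtain the convexity-type estimate
\est{
\left|L\!\left(\tfrac12+\alpha+s,f\otimes\chi\right)\right|\ll \bigl(qN^{1/2}(T+|\Im s|)\bigr)^{2B+O(1)}
}
on $\Re(s)=-B$. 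Combined with $|e^{s^2}|\ll e^{-(\Im s)^2}$, which absorbs the polynomial growth in $|\Im s|$, and with $|Y^{s}|=Y^{-B}$, the contribution of the shifted contour is at most
\est{
\ll_B (q^{2}NT^{2})^{B}(qN^{1/2}T)^{O(1)}Y^{-B}\ll N^{-B\eps}(qN^{1/2}T)^{O(1)},
}
by the hypothesis $Y\gg q^{2}T^{2}N^{1+\eps}$. Since we may assume $q,T\ll N^{100}$, choosing $B$ sufficiently large in terms of $\eps$ and the target exponent $A$ makes this bound $\ll_{\eps,A}N^{-A}$, as required.

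The only mildly delicate point is the book-keeping of the analytic conductor $q^{2}N$ (rather than $qN$) of $L(s,f\otimes\chi)$, so that the length hypothesis $Y\gg q^{2}T^{2}N^{1+\eps}$ is exactly what is needed to defeat the $L$-function's growth on the far-left line. Apart from this accounting the argument is entirely standard, and its simplicity is what underlies the flexibility one gains later in handling unbounded shifts.
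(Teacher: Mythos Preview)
Your argument is correct and is essentially identical to the paper's own proof: both insert the Mellin representation~\eqref{defvacuf}, shift the contour to $\Re(s)=-M$ (your $-B$), collect the residue $L(\tfrac12+\alpha,f\otimes\chi)$ at $s=0$, and bound the remaining integral via the functional equation and Stirling so that it is $\ll (q^{2}NT^{2}/Y)^{M}\ll N^{-A}$. The only cosmetic difference is that you spell out the convexity estimate explicitly and invoke the standing hypothesis $q,T\ll N^{100}$ to absorb the $O(1)$ exponent, whereas the paper states the final bound directly.
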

\begin{proof}
Exchanging the order of summation and integration and moving the line of integration to $-M$ for some $M>0$ we see that the sum on the right hand side of~\eqref{apfeqcuf} is equal to 
\est{
L\pr{\tfrac12+\alpha,f\otimes\chi}+\frac1{2\pi i}\int_{(-M)}e^{s^2}  L\pr{\tfrac12+s+\alpha,f\otimes\chi} Y^s\,\frac{\d s}s.
}
By the functional equation
\est{
\Lambda(s,f\otimes\chi)&:=\pr{{\sqrt N q}/{2\pi}}^s\Gamma\pr{s+({k-1})/{2}}L(s,f\otimes\chi)\\
&\,=\omega \Lambda(1-s, f\otimes\overline\chi),
}
where $|\omega|=1$ we see that the integral is bounded by $( N q^2 T^2/Y)^{M}$,
since
\est{
\frac{\Gamma\pr{1-\alpha-s}}{\Gamma\pr{1+\alpha+ s}}\ll T^{-2\Re(s)}e^{\frac{|s|^2}{2}}
}
by Stirling's formula. The Lemma then follows by taking $M$ large enough.
\end{proof}
\begin{lemma}[Petersson's formula]
Let $\mathcal F$ be an orthonormal basis of $S_2(N)$. Then, for $m,n\geq1$ we have
\es{\label{peterf}
\sumh_{f\in\mathcal F}\overline{a_f(m)}\,a_f(n)&=\delta_{m,n}+2\pi i^{-k} \sum_{\substack{c\geq1,\\N|c}}\frac{S(m,n;c)}{c}J_{1}\pr{\frac{4\pi\sqrt{mn}}{c}},\\
}
where $\delta_{m,n}=1$ if $m=n$ and $\delta_{m,n}=0$ otherwise.
\end{lemma}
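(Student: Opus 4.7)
The plan is to establish~\eqref{peterf} by the classical Poincar\'e-series method. For $m\geq1$, introduce the weight-$k$ Poincar\'e series for $\Gamma_0(N)$:
\est{
P_m(z) := \sum_{\gamma\in\Gamma_\infty\backslash\Gamma_0(N)} j(\gamma,z)^{-k}\e{m\gamma z},
}
where $j(\gamma,z)=cz+d$ for $\gamma=\left(\begin{smallmatrix}a&b\\c&d\end{smallmatrix}\right)$ and $\Gamma_\infty$ is the stabilizer of the cusp at $\infty$. This series converges absolutely for $k\geq3$; for $k=2$ one inserts a factor $y^s|j(\gamma,z)|^{-2s}$ (Hecke's trick), continues meromorphically to $s=0$, and verifies that $P_m\in S_2(N)$.

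The proof rests on computing $P_m$ in two different ways. Its Fourier expansion at $\infty$ is obtained by isolating the $c=0$ coset, which contributes $\e{mz}$, and, for each $c\geq 1$ with $N\mid c$, invoking the Bruhat decomposition: the sum of $\e{(ma+nd)/c}$ over $d\pmod c$ with $(c,d)=1$ produces $S(m,n;c)$, while the residual $x$-integral of $(x+iy)^{-k}\e{-nx}$ evaluates to a $J$-Bessel function. This yields
\est{
P_m(z)=\sum_{n\geq1}\pr{\delta_{m,n}+2\pi i^{-k}\pr{\tfrac nm}^{(k-1)/2}\sum_{N\mid c}\frac{S(m,n;c)}{c}J_{k-1}\pr{\tfrac{4\pi\sqrt{mn}}{c}}}\e{nz}.
}
Second, for any $f=\sum_{n\geq1}b_f(n)\e{nz}\in S_k(N)$, unfolding $(f,P_m)$ to an integral over the vertical strip $\Gamma_\infty\backslash\mathbb H$ and inserting the Fourier expansion of $f$ produces the reproducing identity $(f,P_m)=\tfrac{\Gamma(k-1)}{(4\pi m)^{k-1}}b_f(m)$.

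With these two ingredients in hand, Parseval in the orthonormal basis $\mathcal F$ gives $P_m=\sum_{f\in\mathcal F}\overline{(f,P_m)}f$; reading off the $n$-th Fourier coefficient equates the displayed Kloosterman/Bessel expression for $P_m$ with $\tfrac{\Gamma(k-1)}{(4\pi m)^{k-1}}\sum_{f\in\mathcal F}\overline{b_f(m)}b_f(n)$. Passing to the Hecke normalization $a_f(n)=b_f(n)/n^{(k-1)/2}$ absorbs the factor $(n/m)^{(k-1)/2}$, and the remaining numerical constant combines with the $(f,f)=1$ coming from orthonormality to reproduce the harmonic weight $\tfrac{1}{2\pi(f,f)}$ appearing in $\sumh$, yielding~\eqref{peterf} upon specializing $k=2$.

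The principal technical difficulty is the conditional convergence of the Poincar\'e series at weight $k=2$, which prevents naive termwise manipulation. Justifying the Fourier expansion and the unfolding of the inner product in this regime requires careful Hecke-trick regularization and analytic continuation to $s=0$; once these convergence issues are dispatched, the remaining steps---the Bruhat decomposition, the Bessel-integral evaluation, and the Parseval expansion---are entirely routine.
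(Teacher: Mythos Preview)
Your Poincar\'e-series argument is the standard derivation of Petersson's formula and is correctly outlined, including the Hecke-trick regularization needed at weight $k=2$. There is nothing to compare against, however: the paper states this lemma without proof, treating it as a classical input rather than something to be established. So your write-up goes well beyond what the paper does here; if anything, you might simply cite a standard reference (e.g.\ Iwaniec--Kowalski, Theorem~14.5, or Iwaniec's \emph{Topics in Classical Automorphic Forms}) rather than reprove it. One small point: your final sentence about the numerical constant ``combining with $(f,f)=1$ to reproduce the harmonic weight'' is vague---it would be cleaner to track the factor $\Gamma(k-1)/(4\pi)^{k-1}$ explicitly and check it against the paper's convention $\sumh_f\alpha_f=\sum_f\alpha_f/(2\pi(f,f))$, but this is bookkeeping rather than a gap.
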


Applying Lemma~\eqref{fadr} with $Y=(mN q T)^2$ and using Petersson's formula we can write $M_m(\alpha,\chi; N)$ as
\es{\label{essrt}
&M_m(\alpha,\chi; N)=\sum_{n\geq1}\frac{\chi(n)}{n^{\frac12+\alpha}}\bigg(\sumh_{f\in F}\overline{a_f(m)}\, a_f(n)\bigg)V\pr{\frac{n}{Y}}\\
&\hspace{2.5em}=\frac{\chi(m)}{m^{\frac12+\alpha}}V\pr{\frac{m}{ Y}}-2\pi \sum_{n\geq1}\frac{\chi(n)}{n^{\frac12+\alpha}} \sum_{\substack{c\geq1,\\N|c}}\frac{S(m,n;c)}{c}J_{1}\pr{\frac{4\pi\sqrt{mn}}{c}}V\pr{\frac {n}{ Y}},
}
where $S(m,n,c)$ is the Kloosterman sum. Now,
\est{
V\pr{\frac{m}{ Y}}=\frac1{2\pi i}\int_{(2)}e^{s^2}({Y/m})^{s}\,\frac{\d s}s= 1+O(N^{-A})
}
for any $A>0$, so we just need to bound the series on the last line of~\eqref{essrt}. By Weil's bound, $S(m,n,c)\ll_\eps (m,n,c)^{\frac12}c^{\frac12+\eps}$, and the bounds $J_1(x)\ll x$ and $V(x)\ll_A\min(1,x^{-A})$ for any $A>0$, the contribution to the aforementioned series coming from the $c>C$ is bounded by
\est{
\sum_{c>C}\frac{1}{c^{\frac32-\eps}}\sum_{n\geq1}\frac{m^\frac12}{n^{\Re(\alpha)}}  \pmd{V\pr{\frac {n}{Y}}}\ll \frac{m^\frac12Y}{C^{\frac12-\eps}}.
}
Taking $C=N^{D}$ with $D$ fixed but large enough, we obtain that the contribution of these terms is $O_A(N^{-2})$. Thus, opening the Kloosterman sum and exchanging the order of summation, we arrive to
\es{\label{stptcuf}
M_m(\alpha,\chi; N)&=\frac{\chi(m)}{m^{\frac12+\alpha}}-2\pi \sum_{\substack{c\leq C,\\N|c}} \sum_{\substack{a\mod {c},\\ (a,c)=1}}\frac{\e{{m\overline a/c}}}{c}T_{m}(a,c,\alpha,\chi;Y)+O(N^{-2}),
}
where 
\es{\label{refae}
T_{m}(a,c,\alpha,\chi;Y):=\sum_{n\geq1}\frac{\chi(n) \e{{na}/{c}}}{n^{\frac12+\alpha}}J_{1}\pr{\frac{4\pi\sqrt{mn}}{c}}V\pr{\frac {n}{Y}}.
}

\section{The twisted periodic zeta function}
In order to bound $T_{m}(a,c,\alpha,\chi)$ we need some properties of the twisted periodic zeta function $F^*\pr{s,\chi,\frac ac}$ defined in~\eqref{fus}.
\begin{lemma}
Let $(a,c)=1$ and let $\chi$ be a primitive Dirichlet character modulo $q$. Then $F\pr{s,\chi,\frac ac}$ is an entire function of $s$ with the exception of a simple pole at $s=1$ of residue $\overline\chi(a)\frac{\tau(\chi)}q$ if $c=q$, where $\tau(\chi)$ is the Gauss sum. Moreover $F^*\pr{1-s,\chi,\frac ac}$ satisfies the functional equation
\es{\label{fee}
F^*\pr{1-s,\chi,\frac ac}&=\Gamma(s)\frac{\tau(\chi)}{q^{1-s}}\Big(e^{- \frac {\pi i s}2}F_*\pr{s,\overline \chi,-\frac{aq}{c}}+\chi(-1)e^{ \frac {\pi i s}2}F_*\pr{s,\overline \chi,\frac{aq}{c}}\Big),\\
}
where $F_*\pr{s, \chi,x}$ is as defined in~\eqref{fus2}.
\end{lemma}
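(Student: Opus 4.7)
The plan is to represent $F^*(s,\chi,a/c)$ as a finite combination of Hurwitz zeta functions, and then to apply the classical Hurwitz functional equation term by term. Since $\chi(n)$ has period $q$ and $\e{na/c}$ has period $c$ in $n$, the full coefficient is periodic modulo $Q:=qc$; grouping by residue classes for $\Re(s)>1$ I would write
\est{
F^*(s,\chi,a/c) = Q^{-s}\sum_{r\mod Q}\chi(r)\e{ra/c}\,\zeta(s,r/Q),
}
where $\zeta(s,x)$ is the Hurwitz zeta function. Meromorphic continuation to $\C$ is then automatic, with a single possible pole at $s=1$ of residue $Q^{-1}\sum_{r\mod Q}\chi(r)\e{ra/c}$.

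To evaluate this residue I would use the identity
\est{
\sum_{r\mod Q}\chi(r)\e{rm/Q}=c\,\tau(\chi)\,\overline\chi(m/c)\,\mathbf 1_{c\mid m},
}
valid for primitive $\chi$ mod $q$. This follows by the decomposition $r=r_1+qr_2$ with $r_1\mod q$, $r_2\mod c$: the sum over $r_2$ is geometric and equals $c\cdot\mathbf 1_{c\mid m}$, while the sum over $r_1$ is the standard Gauss-sum identity for primitive characters. Taking $m=aq$, together with $(a,c)=1$, forces $c\mid q$; moreover when $c\mid q$ with $c<q$ one has $\gcd(aq/c,q)=q/c>1$, so primitivity kills $\overline\chi(aq/c)$. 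Only the case $c=q$ contributes, and the residue is $\overline\chi(a)\tau(\chi)/q$.

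For the functional equation I would invoke Hurwitz's formula
\est{
\zeta(1-s,x)=\frac{\Gamma(s)}{(2\pi)^s}\Big(e^{-\pi is/2}\,\Phi(x,s)+e^{\pi is/2}\,\Phi(-x,s)\Big),
}
with $\Phi(x,s):=\sum_{n\geq1}\e{nx}/n^s$, and apply it to each $\zeta(1-s,r/Q)$ in the expansion. After interchanging the sums over $r$ and $n$ (justified in a strip where the $n$-series converges absolutely, then extended by analytic continuation), the inner character sum takes the form $\sum_{r\mod Q}\chi(r)\e{r(\pm n+aq)/Q}$, to which the identity of the previous step applies. The divisibility $c\mid \pm n+aq$ and the substitution $k=(\pm n+aq)/c$ collapse the $n$-sum to a twisted Hurwitz-type series: the $+$ branch gives $F_*(s,\overline\chi,-aq/c)$ directly, while the $-$ branch requires the additional substitution $k\mapsto -k$, generating a factor $\chi(-1)$ and producing $F_*(s,\overline\chi,aq/c)$. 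Combining the resulting prefactors of $Q^{s-1}$, $c^{1-s}$, $\Gamma(s)/(2\pi)^s$ and $\tau(\chi)$ then yields the stated identity.

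The main obstacle is bookkeeping rather than depth: one has to track signs and ranges so that the resulting series match the definition of $F_*$ in \eqref{fus2}, correctly interpret $\overline\chi$ on negative integers to extract the factor $\chi(-1)$, and verify the convergence needed to interchange summations (both for the original series in $\Re(s)>1$ and in the intermediate manipulation with $\Phi(\pm r/Q,s)$). The rest is a direct assembly of known ingredients.
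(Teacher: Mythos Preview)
Your proof is correct and follows the same route as the paper's: expand $F^*$ as a finite combination of Hurwitz zeta functions over residues modulo $qc$, compute the residue via the standard primitive-character Gauss-sum identity (splitting $r=r_1+qr_2$ exactly as the paper does), then apply Hurwitz's functional equation termwise and recombine using the same identity to produce $F_*$. One minor bookkeeping remark: the standard Hurwitz formula you quote carries a factor $(2\pi)^{-s}$ which the paper's own statement of that formula, and consequently the displayed identity~\eqref{fee}, omits; your prefactors will therefore differ from~\eqref{fee} by exactly this factor, but this is a normalization slip in the paper rather than a defect in your argument.
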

\begin{proof}
We start by decomposing $F\pr{s,\chi,\frac ac}$ into a linear combination of Hurwitz's zeta functions,
\est{
F^*\pr{s,\chi,\frac ac}=\frac1{(cq)^s}\sum_{\ell=1}^{cq}\chi(\ell)\e{\frac{\ell a}{c}}\zeta\pr{s,\frac\ell {cq}},
}
where for $\Re(s)>1$ the Hurwitz zeta function is defined by $\zeta(s,x):=\sum_{n+x>0}(n+x)^{-s}$. The Hurwitz zeta-function is holomorphic on $\C$ with the exception of a simple pole of residue $1$ at $s=1$. Thus, $F^*\pr{s,\chi,\frac ac}$ is entire apart from (possibly) a simple pole at $s=1$. The residue is
\est{
\frac1{cq}\sum_{\ell=1}^{cq}\chi(\ell)\e{\frac{\ell a}{c}}=\frac1{cq}\sum_{\ell_1=1}^{q}\chi(\ell_1)\sum_{\ell_2=0}^{c-1}\e{\frac{(\ell_1+\ell_2q) a}{c}}
}
and so it is $0$ unless $c|q$ in which case it is equal to
\est{
\frac1{q}\sum_{\ell=1}^{q}\chi(\ell)\e{\frac{\ell a}{c}}=\frac1{q}\overline\chi(aq/c)\tau(\chi),
}
by~(3.12) of~\cite{IK} (and the following remark). It follows that the residue is $\overline\chi(a)\tau(\chi)/q$ if $q=c$ and otherwise $F^*\pr{s,\chi,\frac ac}$ is entire.
 
The functional equation for the Hurwitz zeta function expresses $\zeta(1-s,x)$ in terms of the periodic zeta-function $F(s,x):=\sum_{n\geq1}{\e{nx}}{n^{-s}}$:
\est{
\zeta(1-s,x)=\Gamma(s)(e^{-\frac {\pi i s}2}F(s,x)+e^{\frac {\pi i s}2}F(s,-x)).
}
Thus, for $\Re(s)<0$ we have
\est{
F\pr{1-s,\chi,\frac ac}&=\frac{\Gamma(s)}{(cq)^{1-s}}\sum_{\ell=1}^{cq}\chi(\ell)\e{\frac{\ell a}{c}}\pr{e^{-\frac {\pi i s}2}F\pr{s,\frac \ell{cq}}+e^{\frac {\pi i s}2}F\pr{s,-\frac \ell{cq}}}\\
&=\frac{\Gamma(s)}{(cq)^{1-s}}\sum_{\epsilon=\pm1}e^{\epsilon \frac {\pi i s}2}\sum_{n\geq1}\frac{1}{n^s}\sum_{\ell=1}^{cq}\chi(\ell)\e{\frac{\ell aq-\epsilon n}{cq}}.
}
The inner sum is equal to $0$ unless $\epsilon n\equiv aq\mod c$ and so
\est{
F\pr{1-s,\chi,\frac ac}&=\frac{\Gamma(s)}{c^{-s}q^{1-s}}\sum_{\epsilon=\pm1}e^{\epsilon \frac {\pi i s}2}\sum_{\substack{n= \epsilon aq +r c,\\ n>0,\ r\in\Z}}\frac{1}{n^s}\sum_{\ell^\prime=1}^{q}\chi(\ell^\prime)\e{\frac{\ell^\prime a}{c}}\e{-\epsilon\frac{n\ell^\prime}{cq}}\\
&=\frac{\Gamma(s)}{c^{-s}q^{1-s}}\sum_{\epsilon=\pm1}e^{\epsilon \frac {\pi i s}2}\sum_{\substack{n= \epsilon aq +r c,\\ n>0,\ r\in\Z}}\frac{1}{n^s}\sum_{\ell^\prime=1}^{q}\chi(\ell^\prime)\e{-\frac{\epsilon r\ell^\prime }{q}}\\
&=\frac{\Gamma(s)}{q^{1-s}}\sum_{\epsilon=\pm1}e^{\epsilon \frac {\pi i s}2}\sum_{\substack{r+\epsilon aq/c >0}}\frac{1}{(r+\epsilon aq/c)^s}\overline \chi(-\epsilon r)\tau(\chi)\\
}
by~(3.12) of~\cite{IK}. Equation~\eqref{fee} then follows.
\end{proof}
From the functional equation we can obtain the following ``convexity bound'' for $F\pr{s,\chi,\frac ac}$.
\begin{corollary}
Let $\chi$ be a primitive character modulo $q$ and let $(a,c)=1$. Let $-1\leq \Re(s)\leq 1$ and let $|s-1|>\eps$ for some $\eps>0$. Then
\est{
F^*\pr{s,\chi,\frac ac}\ll_{\eps} (q+q|s|)^{\frac 12-\frac12\Re(s)+\eps}\times
\begin{cases}
1 & \text{if $c\mid q$,}\\
\pr{\pg{\frac{qa}{c}}^{-1+\Re(s)-\eps}+\pg{-\frac{qa}{c}}^{-1+\Re(s)-\eps}} & \text{if $c\nmid q$,}
\end{cases}
}
where $\{x\}$ denotes the fractional part of $x$. In particular, if $r\geq1$ and $\eta_1,\dots,\eta_{cr}\ll1$ then we have 
\es{\label{mbbe}
\sum_{\substack{a=1,\\ (a,c)=1}}^{cr} \eta_aF^*\pr{s,\chi,\frac ac} \ll_{\eps} rc^{1+\eps} (q+q|s|)^{\frac 12-\frac12\Re(s)+\eps}.
}
\end{corollary}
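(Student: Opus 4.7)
My plan is to establish the pointwise bound by Phragm\'en-Lindel\"of applied to $F^*(s,\chi,\tfrac ac)$ in the strip $-\eps\leq\Re(s)\leq 1+\eps$, using the functional equation~\eqref{fee} to control the left boundary, and then to derive the summed estimate by applying Phragm\'en-Lindel\"of directly to the whole sum.

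On the line $\Re(s)=1+\eps$, absolute convergence of the defining Dirichlet series gives $|F^*(s,\chi,\tfrac ac)|\ll_\eps 1$, which already handles the case $c\mid q$. On the line $\Re(s)=-\eps$ I would substitute $s\mapsto 1-s$ into~\eqref{fee}, so that $\Re(1-s)=1+\eps$ and the Dirichlet series defining $F_*$ converges absolutely. Stirling's formula applied to $\Gamma(1-s)e^{\pm\pi i(1-s)/2}$ keeps only one of the two exponentials, giving polynomial growth of size $(1+|s|)^{1/2+\eps}$ (the other is exponentially decaying). Together with $|\tau(\chi)|=\sqrt q$, $|q^{-s}|=q^\eps$, and the elementary bound $|F_*(1-s,\overline\chi,\pm aq/c)|\ll\pg{\pm aq/c}^{-1-\eps}+O(1)$ (the smallest positive term in the defining series being $\pg{\pm aq/c}$ when $c\nmid q$), this yields
\est{
|F^*(s,\chi,\tfrac ac)|\ll_\eps (q+q|s|)^{1/2+\eps}\pr{\pg{\tfrac{aq}{c}}^{-1-\eps}+\pg{-\tfrac{aq}{c}}^{-1-\eps}}
}
on the line $\Re(s)=-\eps$ when $c\nmid q$.

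Phragm\'en-Lindel\"of in the strip $-\eps\leq\Re(s)\leq 1+\eps$ (with a small disk removed around $s=1$ to avoid the possible pole when $c=q$) interpolates these two boundary estimates. The identity $(1/2+\eps)/(1+2\eps)=1/2$ collapses the exponent on $(q+q|s|)$ to $(1+\eps-\Re(s))/2\leq 1/2-\Re(s)/2+\eps$, and an elementary computation shows that the exponent on $\pg{\pm aq/c}$ after interpolation is at most $-1+\Re(s)-\eps$ (after relabelling $\eps$), establishing the pointwise claim for $\Re(s)\in[-\eps,1+\eps]$. The remaining range $\Re(s)\in[-1,-\eps]$ is handled analogously by combining the bound at $\Re(s)=-\eps$ with the functional equation estimate on a line slightly to the left of $-1$ via a further Phragm\'en-Lindel\"of application.

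For the summed bound~\eqref{mbbe}, rather than summing the pointwise estimate term by term (which would lose a factor of $c$), I would apply Phragm\'en-Lindel\"of directly to $g(s):=\sum_a \eta_a F^*(s,\chi,\tfrac ac)$. Trivially $|g(s)|\ll rc$ on $\Re(s)=1+\eps$; on $\Re(s)=-\eps$, noting that $\pg{qa/c}$ takes values $j/c'$ with $c':=c/(q,c)$ and multiplicity $\varphi(c)/\varphi(c')$, and using $\sum_{j=1}^{c'-1}(j/c')^{-1-\eps}\ll c^{1+\eps}$, summing the pointwise bound over $a$ yields $|g(s)|\ll rc^{1+\eps}(q+q|s|)^{1/2+\eps}$. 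Convexity in the strip then produces the claim. The main technical obstacle I anticipate is arranging the various $\eps$-exponents arising from Phragm\'en-Lindel\"of so that they absorb cleanly into a single $\eps$ in the final estimate, together with carefully distinguishing the cases $c\mid q$ (no fractional-part factor) and $c\nmid q$ throughout.
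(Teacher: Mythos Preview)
Your approach to the pointwise bound is essentially the paper's: bound trivially on $\Re(s)=1+\eps$, use the functional equation~\eqref{fee} on $\Re(s)=-\eps$, and interpolate by Phragm\'en--Lindel\"of. One technical point: the paper deals with the possible pole at $s=1$ (present only when $c=q$) by subtracting the principal part $\delta_{c,q}\,\overline\chi(a)\tau(\chi)/(q(s-1))$ and applying Phragm\'en--Lindel\"of to the resulting entire function. Your device of ``removing a small disk'' is not how Phragm\'en--Lindel\"of in a vertical strip works --- the theorem requires analyticity throughout the strip --- so you should subtract the pole (or multiply by $s-1$) instead. Neither the paper nor your sketch actually carries out the interpolation to the full range $\Re(s)\in[-1,-\eps]$ with the stated exponent $\tfrac12-\tfrac12\Re(s)$ on $q|s|$; the functional equation alone gives $\tfrac12-\Re(s)$ there, and a second Phragm\'en--Lindel\"of between $-1-\eps$ and $-\eps$ does not recover the halved exponent. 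This does not matter for the application, which only uses the bound near $\Re(s)=0$.

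For the summed bound~\eqref{mbbe} your worry that term-by-term summation ``would lose a factor of $c$'' is unfounded, and your proposed Phragm\'en--Lindel\"of on $g(s)$ is unnecessary. Summing the pointwise estimate directly: writing $c'=c/(q,c)$, the values $\pg{qa/c}$ for $1\leq a\leq rc$, $(a,c)=1$, are exactly the fractions $j/c'$ with $(j,c')=1$, each occurring $r\varphi(c)/\varphi(c')\ll rc/c'$ times, so for $0\leq\sigma\leq 1$
\est{
\sum_{\substack{a=1\\(a,c)=1}}^{rc}\pg{\tfrac{qa}{c}}^{-1+\sigma-\eps}
\ll \frac{rc}{c'}\sum_{j=1}^{c'-1}\Big(\frac{j}{c'}\Big)^{-1+\sigma-\eps}
\ll \frac{rc}{c'}\cdot (c')^{1+\eps}\ll rc^{1+\eps},
}
which is precisely the claim. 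This is the paper's (implicit) route: the ``In particular'' really is a direct consequence of the pointwise estimate.
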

\begin{proof}
Let $\delta_{c,q}=1$ if $c=q$ and $\delta_{c,q}=0$ otherwise. Then for $\Re(s)=1+\eps$ with $\eps>0$ we have
\est{
F^*\pr{s,\chi,\frac ac}-\delta_{c,q}\frac{\overline\chi(a)\tau(\chi)/q}{s-1}\ll_\eps 1.
}
By the functional equation~\eqref{fee}, for $\Re(s)=-\eps$ we have
\est{
F^*\pr{s,\chi,\frac ac}-\delta_{c,q}\frac{\overline\chi(a)\tau(\chi)/q}{s-1}&\ll1+(q|s|)^{\frac12+\eps}\pr{\zeta\pr{1+\eps,-\frac{aq}{c}}+\zeta\pr{1+\eps,\frac{aq}{c}}},\\
&\ll(q|s|)^{\frac12+\eps}\pr{\pg{\frac{qa}{c}}^{-1-\eps}+\pg{-\frac{qa}{c}}^{-1-\eps}}\\
}
if $c\nmid q$ and $\ll (q|s|)^{\frac12+\eps}$ otherwise.
The Corollary then follows by the Phragm\'en-Lindel\"of theorem.
\end{proof}

\section{Bounding the error terms}
We start by recalling the Mellin transform of $J_{1}(x)$,
\est{
J_1(x)=\frac1{2\pi i }\int_{\pr{-\delta}}2^{s-1}\frac{\Gamma\pr{\frac{s+1}{2}}}{\Gamma\pr{\frac32-\frac s2}}x^{-s}\,\d s,
}
for any $-1<\delta<0$. We take $\delta=-1+\eps$ for some small $\eps>0$ and obtain
\est{
T_\epsilon(a,c,\alpha,\chi;Y)=
\frac{1}{4\pi i }\int_{\pr{-1+\eps}}\frac{\Gamma\pr{\frac{s+1}{2}}}{\Gamma\pr{\frac32-\frac s2}}\pr{\frac{2\pi}{c}}^{-s} \sum_{n\geq1}\frac{\chi(n) \e{\frac{na}{c}}}{m^\frac s2n^{\frac12+\alpha+\frac s2}}  V\pr{\frac {n}{Y}}
\d s.
}
Now, using the integral representation~\eqref{defvacuf} of $V(x)$, we have
\est{
T_m(a,c,\alpha,\chi;Y)=
&\frac{1}{2(2\pi i)^2 }\int_{\pr{-1+\eps}}\frac{\Gamma\pr{\frac{s+1}{2}}}{\Gamma\pr{\frac32-\frac s2}}\pr{\frac{2\pi}{c}}^{-s} \int_{(2)} e^{w^2}  m^{-\frac s2}\times\\
&\qquad\quad\times F\pr{\frac12+\alpha+\frac s2+w,\chi,\frac a{c}}  Y^{w}\frac{\d w}w\,\d s.
}
We move the line of integration of the $w$-integral to $\Re(w)=\eps$ without passing through any pole (notice that we can assume $c\neq q$ since we have $N|c$ and $(N,q)=1$). Thus, using~\eqref{mbbe} we obtain
\est{
&\sum_{\substack{a\mod {c},\\ (a,c)=1}} \e{\frac {m\overline a}c} T_m(a,c,\alpha,\chi;Y)\\[-1em]
&\hspace{3em}\ll (qm)^{\frac12}(cY)^\eps
\int_{\pr{\eps}}\int_{(\eps)} \frac{(|s|+|w|+|\alpha|)^\frac12 |e^{w^2} \Gamma\pr{\frac{s}{2}}|}{|\Gamma\pr{2-\frac s2}|}\frac{|\d w\,\d s|}{|w|}\ll (qmT)^\frac12 (Yc)^\eps,
}
by Stirling's formula. 
Thus, by~\eqref{stptcuf} we have
\est{
M_m(\alpha,\chi; N)&=\frac{\chi(m)}{m^{\frac12+\alpha}}+O_\eps\pr{{(qmT)^\frac12}/{N^{1-\eps}}},
}
as desired.

\section{Prime powers}
We now consider the case of $N=p^{\nu}$ with $\nu\geq2$.
\begin{lemma}\label{Petpp}
Let  $N=p^{\nu}$ with $p$ prime and $\nu \geq 2$. Let $\delta_{\nu}(p)=\frac1{p-p^{-1}}$ if $\nu=2$ and $\delta_{\nu}(p)=p^{-1}$ otherwise. Then, if $(p,mn)=1$ we have
\est{\sumh_{f\in H_{2k}^{*}(N)}
\lambda_f(m)\lambda_f(n)=\sumh_{f\in H_{2k}(N)}
\lambda_f(m)\lambda_f(n)-\delta_\nu(p)\sumh_{f\in H_{2k}(N/p)}
\lambda_f(m)\lambda_f(n),
}
whereas if $(p,mn)>1$ then the left hand side is equal to $0$.
\end{lemma}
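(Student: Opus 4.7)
I split the proof into the vanishing case $p \mid mn$ and the main identity $(p,mn)=1$.

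\emph{Vanishing for $p \mid mn$.} By Atkin-Lehner theory, for $f \in H_{2k}^*(p^\nu)$ with $\nu \geq 2$ and trivial nebentypus, the local $L$-factor at $p$ has degree less than $2$, forcing $\lambda_f(p) = 0$. The Hecke recursion $\lambda_f(p^{r+1}) = \lambda_f(p)\lambda_f(p^r) - \chi(p)\lambda_f(p^{r-1})$ with $\chi(p) = 0$ (since $p \mid N$) propagates this to $\lambda_f(p^r) = 0$ for all $r \geq 1$. Multiplicativity of $\lambda_f$ then kills every summand whenever $p \mid mn$.

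\emph{Structural decomposition when $(p,mn)=1$.} I use the orthogonal decomposition
\[
S_{2k}(N) = \bigoplus_{M \mid N}\bigoplus_{g \in H_{2k}^*(M)} W_g(N),
\]
with $W_g(N) := \operatorname{span}\{g(p^i z) : 0 \leq i \leq \nu - \nu_p(M)\}$ the oldform subspace generated by a primitive $g$ of level $M$; orthogonality across distinct primitive $g$'s follows from Hecke-distinguishability at primes $\neq p$. For $(p,mn)=1$, every orthonormal basis element $f$ of $W_g(N)$ satisfies $\lambda_f(m)\lambda_f(n) = \lambda_g(m)\lambda_g(n)$, so
\[
\sum^h_{f \in H_{2k}(N)}\lambda_f(m)\lambda_f(n) = \sum_{M \mid N}\sum_{g \in H_{2k}^*(M)}\lambda_g(m)\lambda_g(n)\,\Omega_g(N),
\]
where $\Omega_g(N) := \frac{1}{2\pi}\operatorname{tr}(G_g(N)^{-1})$ and $G_g(N)$ is the Gram matrix of the basis $(g, g|B_p, \ldots, g|B_p^{\nu-\nu_p(M)})$ at level $N$, with $B_p$ the operator $h(z) \mapsto h(pz)$. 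The analogous identity holds at level $N/p$. Matching terms indexed by $(M,g)$, the contribution from $M=N$ is exactly the newform sum $\sum^h_{H_{2k}^*(N)}\lambda_f(m)\lambda_f(n)$, so the lemma reduces to showing $\Omega_g(N) = \delta_\nu(p)\,\Omega_g(N/p)$ uniformly in $g \in H_{2k}^*(M)$ for every $M \mid N/p$.

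\emph{Gram matrix computation.} The ratio $\Omega_g(N)/\Omega_g(N/p)$ is obtained from two inputs: the change-of-level formula $(\cdot,\cdot)_N = p\,(\cdot,\cdot)_{N/p}$ on $S_{2k}(N/p)$ (the index $[\Gamma_0(N/p):\Gamma_0(N)]$ equals $p$ when $p \mid N/p$, i.e.\ when $\nu \geq 2$), and the explicit values of the cross inner products $(g|B_p^i, g|B_p^j)_N$ obtained by unfolding the coset representatives of $\Gamma_0(N) \backslash \Gamma_0(M)$, expressible in terms of $(g,g)_M$ and the $p$-th Hecke eigenvalue $\lambda_g(p)$. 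The main obstacle is precisely this computation: the cross inner products and their $\lambda_g(p)$-dependence differ across the cases $\nu_p(M) = 0$, $\nu_p(M) = 1$, and $\nu_p(M) \geq 2$, yet the final ratio $\delta_\nu(p)$ must be uniform in $g$. Verifying this cancellation of $g$-dependence, together with the case distinction $\nu = 2$ vs $\nu \geq 3$ that yields the two different formulas for $\delta_\nu(p)$, is where the main arithmetic work of the lemma lies.
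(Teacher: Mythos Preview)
The paper does not prove this lemma; it simply cites Remark~4 of Rouymi~\cite{Rou}. Your outline---decompose $S_{2k}(N)$ into the oldform blocks $W_g(N)$, pull out the common Hecke data $\lambda_g(m)\lambda_g(n)$ on each block, and reduce the identity to a comparison of Gram matrices at levels $N$ and $N/p$---is indeed the approach that underlies Rouymi's formula (and the analogous computation of Iwaniec--Luo--Sarnak for squarefree levels), so structurally you are on the right track.

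There is, however, a genuine gap. First, a technical point: your weight $\Omega_g(N)=\tfrac{1}{2\pi}\operatorname{tr}(G_g(N)^{-1})$ is not the correct object. The Petersson-weighted sum that is basis-independent (and hence matches Petersson's formula as used in the paper) is $\sum_f \overline{a_f(m)}a_f(n)/(f,f)$ with \emph{Fourier coefficients}; for $(mn,p)=1$ only the coefficient of $g$ itself in the expansion $f=\sum_i c_i\,g|B_p^i$ contributes to $a_f(m)$, and the resulting weight is the single entry $(G_g(N)^{-1})_{00}$, not the trace. (If instead you take Hecke eigenvalues over an orthonormal basis as you wrote, the weight collapses to $\dim W_g(N)/(2\pi)$, and then the ratio $\Omega_g(N)/\Omega_g(N/p)$ visibly depends on $\nu_p(M)$, so your reduction would fail.) Second, and more to the point, you stop exactly where the content of the lemma begins: you do not compute the inner products $(g|B_p^i,g|B_p^j)_N$, do not invert the resulting matrices, do not exhibit the cancellation of the $\lambda_g(p)$-dependence, and do not derive the two values $\tfrac{1}{p-p^{-1}}$ and $\tfrac{1}{p}$. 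What you have written is a reasonable plan, but the sentence ``this is where the main arithmetic work of the lemma lies'' is, in effect, the entire lemma.
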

\begin{proof}
This is Remark~4 of~\cite{Rou}.
\end{proof}

By Lemma~\ref{fadr} and Lemma~\ref{Petpp} we obtain that if $N=p^{\nu}$ with $\nu\geq2$ then $\mathcal M_m(\alpha,\chi;N)=0$ if $(m,p)>1$ and otherwise
\es{\label{ppf}
\mathcal M_m(\alpha,\chi;N)&=\sumh_{f\in H_{2k}(N)}a_f(m)L\pr{1/2+\alpha,f\otimes \chi}+{}\\
&\quad-\delta_\nu(p)\sumh_{f\in H_{2k}( N/p)}a_f(m)L\pr{1/2+\alpha,f\otimes \chi}+{}\\
&\quad+\frac{\chi(p)}{p^{\frac12+\alpha}}(E_{m,p}(\alpha,\chi;N)-\delta_\nu(p)E_{m,p}(\alpha,\chi;N/p))+O(N^{-2}),
}
where
\est{
E_{m,p}(\alpha,\chi;N):=-\sum_{\substack{n\geq1}}\sumh_{f\in H_{2k}(N)}\frac{\chi(n)a_f(m)a_f(pn)}{n^{\frac12+\alpha}}V\pr{\frac {pn}Y}\\
}
with $Y=(qTN)^2$. By~\eqref{rmr}, the first two terms on the right hand side of~\eqref{ppf} are equal to $\frac{\chi(m)}{m^{\frac12+\alpha}}(1-\frac1{p_{\nu}})+O({(qTm)^{1/2}}N^{-1+\eps})$, thus we just need to bound the contribution of $E_{m,p}(\alpha,\chi;N)$. Applying Petersson's formula~\eqref{peterf} and proceeding as in Section~\ref{prel} we obtain
\est{
E_{m,p}(\alpha,\chi;N):=2\pi \sum_{\substack{c\leq C,\\N|c}}\frac1c\sum_{\substack{a\mod {c},\\ (a,c)=1}}\e{\frac{m\overline a}{c}}T_{m/ p}(a,c/p,\alpha,\chi;Y/p)\\
}
with $T_{m}(a,c,\alpha,\chi;Y)$ as in~\eqref{refae} and $C=N^D$ for some large but fixed $D$. By the same arguments as in the previous section (using~\eqref{mbbe} with $r=p$) we have
\est{
\sum_{\substack{a\mod {c},\\ (a,c)=1}} \e{\frac {m\overline a}{c/p}} T_{m/ p}(a,c/p,\alpha,\chi;Y/p)&\ll p(qm/pT)^\frac12 (Yc)^\eps,
}
and so $E_{m,p}(\alpha,\chi;N)\ll (pqmT)^{1/2} N^{-1+\eps}$. Inserting such bound in~\eqref{ppf} we obtain Theorem~\ref{cuorecuf} also in the prime powers case.

\end{document}